\newtheorem{thm}{Theorem}[section]
\newtheorem{theorem}[thm]{Theorem}
\newtheorem{corollary}[thm]{Corollary}
\newtheorem{lemma}[thm]{Lemma}
\newtheorem{proposition}[thm]{Proposition}
\newtheorem{definition}[thm]{Definition}
\theoremstyle{remark}
\newtheorem{remark}[thm]{Remark}
\newcommand{\norm}[1]{\|#1\|}
\newcommand{\RR}{\mathbb R}
\newcommand{\CC}{\mathbb C}
\newcommand{\HH}{\mathbb H}
\DeclareMathAlphabet{\mathpzc}{OT1}{pzc}{m}{it}
\begin{document}

\title{Weak Phase Retrieval and Phaseless Reconstruction}
\author[Botelho-Andrade, Casazza, Ghoreishi, Jose, Tremain
 ]{Sara Botelho-Andrade, Peter G. Casazza,
 Dorsa Ghoreishi, Shani Jose, Janet C. Tremain}
\address{Department of Mathematics, University
of Missouri, Columbia, MO 65211-4100}

\thanks{The authors were supported by
 NSF DMS 1609760; NSF ATD 1321779; and ARO  W911NF-16-1-0008}

\email{sandrade102087@gmail.com, ‎Casazzap@missouri.edu; }
\email{dorsa.ghoreishi@gmail.com, shanijose@gmail.com}
\email{‎  ‎  Tremainjc@missouri.edu}

\subjclass{42C15}

\begin{abstract} Phase retrieval and phaseless reconstruction for Hilbert space frames is a very active area of research.  Recently,
it was shown that these concepts are equivalent.  In this
paper, we make a detailed study of a weakening of these
concepts to weak phase retrieval and weak phaseless
reconstruction.  We will give several necessary and/or
sufficient conditions for frames to have these weak
properties.  We will prove three surprising results:
(1)  Weak phaseless reconstruction is equivalent to
phaseless reconstruction.  I.e.  It never was {\it weak};
(2)  Weak phase retrieval is not equivalent to weak
phaseless reconstruction; (3)  Weak phase retrieval requires at least $2m-2$ vectors in an m-dimensional
Hilbert space.  We also gives several examples illustrating
the relationship between these concepts.
\end{abstract}

\maketitle
\section{Introduction}
The problem of retrieving the phase of a signal, given a set of intensity measurements, has been studied by engineers for many years. Signals passing through linear systems often result in lost or distorted phase information. This partial loss of phase information occurs in various applications including speech recognition~\cite{BeRi99,RaJu93,ReBlScCa004}, and optics applications such as X-ray crystallography~\cite{BaMn86,Fi78,Fi82}. The concept of \textit{phase retrieval} for Hilbert space
frames was introduced in 2006 by Balan, Casazza, and Edidin~\cite{BCE} and since then it has become an active area of research. Phase retrieval deals with recovering the phase of a signal given intensity measurements from a redundant linear system. In phaseless reconstruction the unknown signal itself is reconstructed from these measurements. In recent literature, the two terms were used interchangeably.  However it is not obvious from the definitions that the two are equivalent. Recently, authors in~\cite{SaCa016} proved that phase retrieval is equivalent to phaseless reconstruction in both the real and complex case.

Phase retrieval has been defined for vectors as well as for projections. \textit{Phase retrieval by projections} occur in real life problems, such as crystal twinning~\cite{Dr010}, where the signal is projected onto some higher dimensional subspaces and has to be recovered from the norms of the projections of the vectors onto the subspaces. We refer the reader to~\cite{CCPW} for a detailed study of phase retrieval by projections. At times these projections are identified with their target spaces. Determining when subspaces $\{W_i\}_{i=1}^n$ and $\{W_i^\perp\}_{i=1}^n$ both do phase retrieval has given way to the notion of \textit{norm retrieval} \cite{BaCaCaJaWo014}, another important area of research.

While investigating the relationship between phase retrieval and phaseless reconstruction, in ~\cite{SaCa016} it was noted that if two vectors have the same phase then they will be zero in the same coordinates. This gave way to a weakening of phase retrieval, known as weak phase retrieval. In this work, we study the weakened notions of phase retrieval and phaseless reconstruction. One limitation of current methods used for retrieving the phase of a signal is computing power. Recall that a generic family of $(2m-1)$-vectors in $\mathbb{R}^m$ does phaseless reconstruction, however no set of $(2m -2)$-vectors can (See \cite{BCE} for details). By generic we are referring to an open dense set in the set of $(2m -1)$-element frames in $\HH^m$.
  We started with the motivation that weak phase retrieval could be done with $m+1$ vectors in $\mathbb{R}^m$. However, it will be shown that the cardinality condition can only be relaxed to $2m-2$. Nevertheless, the results we obtain in this work are interesting in their own right and contribute to the overall understanding of phase retrieval. We provide illustrative examples in the real and complex cases for weak phase retrieval.

The rest of the paper is organized as follows: In Section~\ref{s:prilim}, we give basic definitions and certain preliminary results to be used in the paper. Weak phase retrieval is defined in Section~\ref{s:weakphaseretrieval}. Characterizations are given in both real and complex case. Also, the minimum number of vectors needed for weak phase retrieval is obtained. In Section~\ref{s:weakphaseless}, we define weak phaseless reconstruction and prove that it is equivalent to phase retrieval in the real case. We conclude by providing certain illustrative examples in Section~\ref{s:examples}.

\section{Preliminaries}\label{s:prilim}

In this section, we introduce some of the basic definitions and results from frame theory. Throughout this paper, $\mathbb{H}^m$ denotes an $m$ dimensional real or complex Hilbert space and we will write $\mathbb{R}^m$ or $\mathbb{C}^m$ when it is necessary to differentiate between the two. We start with the definition of a frame in $\HH^m$.

\begin{definition}\label{D:frame}
A family of vectors $\Phi=\{\phi_i\}_{i=1}^n$ in $\HH^m$ is a {\bf frame} if there are constants $0<A\leq B<\infty$ so that for all $x\in \HH^m$
\[A \|x\|^2 \leq \sum_{i=1}^n |\langle x, \phi_i\rangle|^2\leq B\norm{x}^2, \]
where $A$ and $B$ are the {\bf lower and upper frame bounds} of the frame, respectively. The frame is called an {\bf A-tight frame} if $A=B$ and is a {\bf Parseval frame} if $A=B=1$.
\end{definition}

In addition, $\Phi$ is called an {\bf equal norm frame} if $\norm{\phi_i}=\norm{\phi_j}$ for all $i, j$ and is called a {\bf unit norm frame} if $\norm{\phi_i}=1$ for all $i=1, 2, \ldots n$.

Next, we give the formal definitions of phase retrieval, phaseless reconstruction, and norm retrieval. Note that, here, phase of vector $x=re^{it}$ is taken as $e^{it}$. 

\begin{definition}\label{D:phase_ret&phaseless}
Let $\Phi=\{\phi_i\}_{i=1}^n \in \mathbb{H}^m$ be such that for $x, y\in \mathbb{H}^m$
\[ |\langle x,\phi_i\rangle|=|\langle y,\phi_i\rangle|, \mbox{ for all }i=1,2,\ldots,n. \]
$\Phi$ yields
 \begin{itemize}
  \item[(i)]~ \textbf{phase retrieval} with respect to an
  orthonormal basis $\{e_i\}_{i=1}^m$ if there is a $|\theta|=1$ such that $x$ and $\theta y$ have the same phase.  I.e.  $x_i=\theta y_i$, for all $i=1,2,\ldots,m$, where $x_i = \langle x,e_i\rangle$.

  \item[(ii)]~ \textbf{phaseless reconstruction} if there is a $|\theta|=1$ such that $x=\theta y$.
  \item[(iii)]~ \textbf{norm retrieval} if $\|x\|=\|y\|$.
 \end{itemize}
\end{definition}

We note that tight frames $\{\phi_i\}_{i=1}^m$ for
$\HH^m$ do norm retrieval.  Indeed, if
\[ |\langle x,\phi_i\rangle|=|\langle y,\phi_i\rangle|,
\mbox{ for all }i=1,2,\ldots,m,\]
then
\[ A\|x\|^2 = \sum_{i=1}^m|\langle x,\phi_i\rangle|^2
= \sum_{i=1}^m|\langle y,\phi_i\rangle|^2 = A \|y\|^2.\]
Phase retrieval in $\RR^m$ is classified in terms of a fundamental result called the complement property, which we define below:

\begin{definition}[\cite{BCE}]\label{D:complement_prop}
A frame $\Phi=\{\phi_i\}_{i=1}^n $in $\HH^m$ satisfies the {\bf complement property} if for all subsets ${I}\subset\{1, 2, \ldots, n\}$, either span$\{\phi_i\}_{i\in I}=\HH^m$ or span$\{\phi_i\}_{i\in I^c} = \HH^m$.
\end{definition}

A fundamental result from \cite{BCE} is:

\begin{theorem}[\cite{BCE}]
If $\Phi$ does phaseless reconstruction then it has complement property.  In $\RR^m$, if $\Phi$ has complement property then it does phase retrieval.
\end{theorem}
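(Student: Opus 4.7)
The plan is to prove the two implications separately. For the first, I will argue by contrapositive: if the complement property fails, I will construct two vectors that cannot be recovered up to a global phase. For the second, I will use a parity trick available in the real setting to split the indices into two sets and apply the complement property to one of them.

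For the first direction, suppose complement property fails, so there is some $I\subset\{1,\ldots,n\}$ with $V_1 := (\mathrm{span}\{\phi_i\}_{i\in I})^\perp \ne \{0\}$ and $V_2 := (\mathrm{span}\{\phi_i\}_{i\in I^c})^\perp \ne \{0\}$. I will choose $u\in V_1$ and $v\in V_2$ nonzero and set $x = u+v$, $y = u-v$. A direct check gives $\langle x,\phi_i\rangle = \langle u,\phi_i\rangle = \langle y,\phi_i\rangle$ for $i\in I$ (since $v\perp\phi_i$) and $\langle x,\phi_j\rangle = \langle v,\phi_j\rangle = -\langle y,\phi_j\rangle$ for $j\in I^c$, so the moduli of the measurements agree. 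To finish I need $x\ne \theta y$ for every unimodular $\theta$, and a short computation shows this holds provided $u$ and $v$ are linearly independent. The step requiring care is arranging this independence: if $V_1=V_2$, every vector in this common space is orthogonal to every $\phi_i$, contradicting the frame property, so $V_1\ne V_2$, and then a case analysis (either $V_1\not\subseteq V_2$ so pick $u\in V_1\setminus V_2$ and any nonzero $v\in V_2$, or symmetrically) produces the required independent pair.

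For the second direction, assume $\Phi$ has the complement property in $\RR^m$ and take $x,y$ with $|\langle x,\phi_i\rangle|=|\langle y,\phi_i\rangle|$ for every $i$. Since we are over $\RR$, each equation forces $\langle x,\phi_i\rangle = \pm\langle y,\phi_i\rangle$, so I can partition $\{1,\ldots,n\} = I\sqcup I^c$ with $I=\{i:\langle x-y,\phi_i\rangle=0\}$ and $I^c=\{i:\langle x+y,\phi_i\rangle=0\}$. By the complement property, either $\{\phi_i\}_{i\in I}$ or $\{\phi_i\}_{i\in I^c}$ spans $\RR^m$; in the first case $x-y=0$, in the second $x+y=0$. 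Either way $x = \theta y$ with $\theta \in \{\pm 1\}$, giving phase retrieval (which in $\RR^m$ coincides with phaseless reconstruction).

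The main obstacle is the linear-independence step in the first part; the rest is essentially bookkeeping with orthogonality relations. Once the independence is secured via the frame assumption, the constructed pair $(u+v,u-v)$ provides the explicit counterexample that shows complement property is necessary, and the real second direction follows cleanly by the sign-partition argument.
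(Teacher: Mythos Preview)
The paper does not include its own proof of this theorem; it is simply cited from~\cite{BCE}. Your argument is essentially the standard proof from that reference and is correct in outline. The same $u\pm v$ construction you use appears verbatim later in the paper (in the proof of Corollary~\ref{C:disjointsupp}), so your approach is entirely consistent with the paper's methods.

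One small bookkeeping slip: with your definitions, $u\in V_1$ gives $u\perp\phi_i$ for $i\in I$ and $v\in V_2$ gives $v\perp\phi_i$ for $i\in I^c$, so in your displayed check the roles of $I$ and $I^c$ are swapped (for $i\in I$ it is $u$, not $v$, that is killed, giving $\langle x,\phi_i\rangle=\langle v,\phi_i\rangle=-\langle y,\phi_i\rangle$). This does not affect the conclusion since only the equality of moduli matters. Your handling of the linear-independence issue via the frame property is fine in this paper's context, since complement property is defined only for frames; alternatively one can note directly that if $V_1\cap V_2\ne\{0\}$ then any nonzero $w$ in the intersection and $0$ have identical (zero) measurements, so phaseless reconstruction already fails.
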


It follows that if $\Phi=\{\phi_i\}_{i=1}^n$ does phase retrieval in $\RR^m$ then $n\ge 2m-1$.

Full spark is another important notion of vectors in frame theory. A formal definition is given below:

\begin{definition}\label{D:full_Spark}
Given a family of vectors $\Phi=\{\phi_i\}_{i=1}^n$ in $\HH^m$, the {\bf spark} of $\Phi$ is defined as the cardinality of the smallest linearly dependent subset of $\Phi$. When spark$(\Phi) = m + 1$, every subset of size $m$ is linearly independent, and in that case, $\Phi$ is said to be {\bf full spark}.
\end{definition}

We note that from the definitions it follows that full spark frames with $n\ge 2m-1$ have the complement property and hence do phaseless reconstruction.  Moreover, if $n=2m-1$ then the complement property clearly implies full spark.  



\section{Weak Phase Retrieval}\label{s:weakphaseretrieval}

In this section, we define the notion of weak phase retrieval and make a detailed study of it. We obtain the minimum number of vectors required to do weak phase retrieval. First we define the notion of vectors having weakly the same phase. 

\begin{definition}
Two vectors in $\HH^m$, $x=(a_1,a_2,\ldots,a_m)$ and $y=(b_1,b_2,\ldots,b_m)$ {\bf weakly have the same phase} if there is a $|\theta|=1$ so that
\[ \mbox{phase}(a_i)= \theta \mbox{phase}(b_i), \mbox{ for all }i=1,2,\ldots,m, \mbox{ for which } a_i\not= 0 \not= b_i.\]
In the real case, if $\theta=1$ we say $x,y$ {\bf weakly have the same signs} and if $\theta=-1$ they {\bf weakly have opposite signs.}
\end{definition}

 In the definition above note that we are only comparing the phase of $x$ and $y$ for entries where both are nonzero. Hence, two vectors may \textit{weakly} have the same phase but not have the same phase in the usual sense.  We define weak phase retrieval formally as follows:

\begin{definition}
A family of vectors $\{\phi_i\}_{i=1}^n$ in $\HH^m$ does {\bf weak phase retrieval} if for any $x=(a_1,a_2,\ldots,a_m)$ and $y=(b_1,b_2,\ldots,b_m)$ in $\HH^m$, with
\[ |\langle x,\phi_i\rangle|=|\langle y,\phi_i\rangle|,\mbox{ for all }i=1,2,\ldots,m,\]
then $x,y$ weakly have the same phase.
\end{definition}

Observe that the difference between phase retrieval and weak phase retrieval is that in the later it is possible for $a_i =0$ but $b_i \not= 0$.

\subsection{Real Case}

Now we begin our study of weak phase retrieval in $\RR^m$. The following proposition provides a useful criteria for determining when two vectors have weakly the same or opposite phases. In what follows, we use $[m]$ to denote the set $\{1, 2, \ldots, m\}$.

\begin{proposition}\label{prop1}
Let $x=(a_1,a_2,\ldots,a_m)$ and $y=(b_1,b_2,\ldots,b_m)$ in $\RR^m$.  The following are equivalent:
\begin{enumerate}
\item We have
\[sgn\ (a_ia_j)=sgn\ (b_ib_j),\mbox{ for all }a_ia_j\neq 0\neq b_ib_j.\]
\item Either $x,y$ have weakly the same signs or they have weakly opposite signs.
\end{enumerate}
\end{proposition}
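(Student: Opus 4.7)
The plan is to verify the two implications separately, using that for nonzero real numbers $\sgn(uv)=\sgn(u)\sgn(v)$.

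For $(2)\Rightarrow(1)$, suppose first that $x,y$ weakly have the same signs, so $\sgn(a_i)=\sgn(b_i)$ whenever $a_i\neq 0\neq b_i$. Then for any $i,j$ with $a_ia_j\neq 0\neq b_ib_j$, all four entries are nonzero on the joint support, and I can multiply:
\[\sgn(a_ia_j)=\sgn(a_i)\sgn(a_j)=\sgn(b_i)\sgn(b_j)=\sgn(b_ib_j).\]
If instead $x,y$ weakly have opposite signs, the two minus signs coming from $\sgn(a_i)=-\sgn(b_i)$ and $\sgn(a_j)=-\sgn(b_j)$ cancel, and the same identity holds.

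For $(1)\Rightarrow(2)$, let $S=\{i\in[m]:a_i\neq 0 \text{ and } b_i\neq 0\}$ be the joint support. If $S=\emptyset$, both alternatives in (2) hold vacuously, so assume $S\neq\emptyset$ and define $\epsilon_i=\sgn(a_i)\sgn(b_i)\in\{\pm 1\}$ for $i\in S$. The key step is to show all $\epsilon_i$ are equal: for any $i,j\in S$, hypothesis (1) gives $\sgn(a_i)\sgn(a_j)=\sgn(b_i)\sgn(b_j)$, and multiplying both sides by $\sgn(b_i)\sgn(a_j)$ (which is $\pm 1$) yields $\sgn(a_i)\sgn(b_i)=\sgn(a_j)\sgn(b_j)$, i.e.\ $\epsilon_i=\epsilon_j$. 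Call this common value $\theta\in\{\pm 1\}$. Then $\sgn(a_i)=\theta\sgn(b_i)$ for every $i\in S$, so $x,y$ weakly have the same signs when $\theta=1$ and weakly opposite signs when $\theta=-1$.

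The whole argument is elementary; the only real subtlety is remembering that the relation is only required to hold on the joint support $S$, which is why we must not try to extract information at indices where one of $a_i,b_i$ vanishes. The empty-$S$ corner case is the only minor wrinkle and is handled by noting that both alternatives in (2) are vacuously true.
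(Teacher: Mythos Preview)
Your proof is correct and follows essentially the same approach as the paper: both restrict attention to the joint support $S=\{i:a_i\neq 0\neq b_i\}$ (the paper calls it $K$) and show that the sign relation between $a_i$ and $b_i$ is constant across $S$ using the hypothesis on pairwise products. Your formulation via $\epsilon_i=\sgn(a_i)\sgn(b_i)$ is a slightly slicker packaging of the paper's case split on a fixed index $i_0$, and you explicitly note the vacuous case $S=\emptyset$, which the paper leaves implicit.
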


\begin{proof}
$(1) \Rightarrow (2)$:
Let
\[ I = \{1\le i \le m:a_i=0\}\mbox{ and } J=\{1\le i \le n:b_i=0\}.\]
Let
\[ K=[m]\setminus (I \cup J).\]
So $i\in K$ if and only if $a_i \not= 0 \not= b_i$. Let $i_0 = min\ K$.  We examine two cases:
\vskip12pt
\noindent {\bf Case 1}:  $sgn\ a_{i_0} = sgn\ b_{i_0}$.
\vskip12pt
For any $i_0 \not= k \in K$, $sgn\ (a_{i_0}a_k)= sgn\ (b_{i_0}b_k)$, implies $sgn\ a_k = sgn\ b_k$.  Since all other coordinates of either $x$ or $y$ are zero, it follows that $x,y$ weakly have the same signs.

\vskip12pt
\noindent {\bf Case 2}:  $sgn\ a_{i_0}=-sgn\ b_{i_0}$.
\vskip12pt
For any $i_o \not= k \in K$, $a_{i_0}a_k = b_{i_0}b_k$ implies $sgn\ a_k = - sgn\ b_k$.  Again, since all other coordinates of either $x$ or $y$ are zero, it follows that $x,y$ weakly have opposite signs.
\vskip12pt
$(2)\Rightarrow (1)$:  This is immediate.
\end{proof}

The next lemma will be useful in the following proofs as it gives a criteria for showing when vectors do not weakly
have the same phase.

\begin{lemma}\label{lem1}
Let $x=(a_1,a_2,...,a_m)$ and $y=(b_1,b_2,...,b_m)$ be vectors in $\RR^m$. If there exists $i\in [m]$ such that $a_ib_i\neq 0$ and $\langle x,y\rangle =0$, then $x$ and $y$ do not have weakly the same or opposite signs.
\end{lemma}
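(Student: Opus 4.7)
The plan is to argue by contradiction, splitting into the two subcases of the conclusion. Assume $x$ and $y$ weakly have the same signs; I would unpack the definition to conclude that for every index $k$ with $a_k b_k \neq 0$ one has $\sgn(a_k) = \sgn(b_k)$, and therefore $a_k b_k > 0$. On the other hand, for every index $k$ with $a_k b_k = 0$ the product $a_k b_k$ vanishes. Thus in the expansion
\[
\langle x, y \rangle \;=\; \sum_{k=1}^m a_k b_k,
\]
every term is nonnegative. Since the hypothesis supplies at least one index $i$ with $a_i b_i \neq 0$, that term is strictly positive, forcing $\langle x, y\rangle > 0$, which contradicts $\langle x, y\rangle = 0$.

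The case in which $x$ and $y$ weakly have opposite signs is handled identically: now $a_k b_k \le 0$ for every $k$, with strict inequality at $k = i$, so $\langle x, y\rangle < 0$, again contradicting orthogonality. Combining the two cases yields the lemma.

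I do not expect any real obstacle here; the only point requiring a small amount of care is to make sure to use that entries where only one of $a_k, b_k$ vanishes contribute $0$ to the inner product, so that the sign of $\langle x,y\rangle$ is determined entirely by indices in $K = \{k : a_k b_k \neq 0\}$. Once that observation is recorded, the contradiction with $\langle x, y\rangle = 0$ (guaranteed to be nontrivial because $i \in K$) closes the argument in a couple of lines.
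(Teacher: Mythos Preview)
Your argument is correct and follows essentially the same approach as the paper: a contradiction based on the observation that weakly same (resp.\ opposite) signs force every term $a_kb_k$ to be nonnegative (resp.\ nonpositive), with a strictly positive (resp.\ negative) contribution at the index $i$, contradicting $\langle x,y\rangle=0$. The paper's version is slightly terser but the idea is identical.
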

\begin{proof}
We proceed by way of contradiction.
If $x$ and $y$ weakly have the same phase then $a_jb_j\geq 0 $ for all $j\in [m]$ and in particular we arrive at the following contradiction
\[\langle x,y\rangle =\sum_{j=1}^n a_jb_j\geq a_ib_i>0.\]
If $x$ and $y$ weakly have opposite phases then $a_jb_j\leq 0$ for all $j\in [m]$ and by reversing the inequalities in the expression above we get the desired result.
\end{proof}

The following result relates weak phase retrieval and phase retrieval. Recall that in the real case, it is known that phase retrieval, phaseless reconstruction and the complement property are equivalent~\cite{BCE, SaCa016}.

\begin{corollary}\label{C:disjointsupp}
Suppose $\Phi=\{\phi_i\}_{i=1}^n\in \RR^m$ does weak phase retrieval but fails complement property.  Then there exists two vectors $v,w \in \RR^m$ such that $v \perp w$ and
\begin{equation}\label{E:innerpdt-codition}
 |\langle v,\phi_i\rangle|=|\langle w,\phi_i\rangle| \mbox{ for all i}.
\end{equation}
Further, $v$ and $w$ are disjointly supported.
\end{corollary}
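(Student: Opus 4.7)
The plan is to exploit the failure of the complement property to manufacture two orthogonal vectors with matching absolute frame coefficients, and then to apply Lemma~\ref{lem1} to force their supports to be disjoint.

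First I would use the failure of the complement property to fix a subset $I \subset [n]$ for which neither $\{\phi_i\}_{i \in I}$ nor $\{\phi_i\}_{i \in I^c}$ spans $\RR^m$. This yields nonzero vectors $a \perp \mathrm{span}\{\phi_i\}_{i \in I}$ and $b \perp \mathrm{span}\{\phi_i\}_{i \in I^c}$, and I would rescale them so that $\|a\| = \|b\|$.

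The key idea is then to set $v = a+b$ and $w = a-b$. Orthogonality follows from $\langle v, w\rangle = \|a\|^2 - \|b\|^2 = 0$. For $i \in I$ one has $\langle v, \phi_i\rangle = \langle b, \phi_i\rangle = -\langle w, \phi_i\rangle$, while for $i \in I^c$ one has $\langle v, \phi_i\rangle = \langle a, \phi_i\rangle = \langle w, \phi_i\rangle$. Either way $|\langle v, \phi_i\rangle| = |\langle w, \phi_i\rangle|$, which is exactly~\eqref{E:innerpdt-codition}.

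Finally, the weak phase retrieval hypothesis forces $v$ and $w$ to weakly have the same phase, i.e.\ in the real setting either weakly the same signs or weakly opposite signs. Since $v \perp w$, Lemma~\ref{lem1} rules out the existence of any index $i$ with $v_i w_i \neq 0$, so $v$ and $w$ are disjointly supported, completing the proof. I do not expect any genuine obstacle; the only point that needs care is the normalization $\|a\| = \|b\|$ (without which $v \perp w$ could fail), but this is harmless because $a$ and $b$ are defined up to nonzero scaling.
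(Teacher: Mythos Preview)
Your argument is correct and is essentially the same as the paper's: both proofs pick unit (equivalently, equal-norm) vectors orthogonal to the two non-spanning halves, form $v=a+b$ and $w=a-b$, verify $v\perp w$ and the equality of absolute frame coefficients, and then invoke weak phase retrieval together with Lemma~\ref{lem1} to force disjoint support. Your explicit remark about the normalization $\|a\|=\|b\|$ is exactly the point the paper handles by choosing $\|x\|=\|y\|=1$.
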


\begin{proof} By the assumption, $\Phi=\{\phi_i\}_{i=1}^n$ fails complement property so there exists  $I\subset [n]$, s.t. $A= Span\{\phi_i\}_{i\in I}$ $\neq$ $\RR^m$ and $B= Span\{\phi_i\}_{i\in I^c}$ $\neq$ $\RR^m$. Choose $\norm{x}=\norm{y}=1$ such that $x\perp A$ and $y\perp B$. Then
\begin{equation}\nonumber
 |\langle x+y,\phi_i\rangle|=|\langle x-y,\phi_i\rangle|
\mbox{ for all i=1, 2, \ldots, n}.
\end{equation}
Let $w=x+y$ and $v=x-y$. Then $v\perp w$. Observe
\[\langle w,v\rangle=\langle x+y,x-y\rangle = \|x\|^2+\langle y,x\rangle -\langle x,y\rangle-\|y\|^2=0.\]
Moreover, the assumption that $\Phi$ does weak phase retrieval implies $v$ and $w$ have weakly the same or opposite phases. Then it follows from Lemma \ref{lem1} that $v_iw_i=0$
for all $i=1,2,\ldots,m$ and so $v$ and $w$ are disjointly supported.
\end{proof}

\noindent {\bf Example}:  In $\RR^2$ let $\phi_1=(1,1)$ and
and $\phi_2=(1,-1)$.  These vectors clearly fail complement
property.  But if $x=(a_1,a_2)$, $b=(b_1,b_2)$ and we have,
\[ |\langle x,\phi_i\rangle|=|\langle y,\phi_i\rangle|,
\mbox{ for }i=1,2,\]
then
\[ |a_1+a_2|^2=|b_1+b_2|^2 \mbox{ and } |a_1-a_2|^2
=|b_1-b_2|^2.\]
By squaring these out and subtracting the result we get:
\[ 4a_1a_2=4b_1b_2.\]
Hence, either $x,y$ have the same signs or opposite signs.
I.e.  These vectors do weak phase retrieval.
\vskip12pt

With some particular assumptions, the following proposition gives the specific form of vectors which do weak phase retrieval but not phase retrieval.

\begin{proposition}
Let $\Phi=\{\phi_i\}_{i=1}^n\in \RR^m$ be such that $\Phi$ does weak phase retrieval but fails complement property. Let $x=(a_1, a_2, \ldots, a_m),$ $y=(b_1, b_2, \ldots b_m) \in \RR^m $ such that $x+y\perp x-y$ and satisfy equation~\eqref{E:innerpdt-codition}. If $a_ib_i\neq 0$, $a_jb_j\neq 0$ for some $i, j$ and all other co-ordinates of $x$ and $y$ are zero, then
\[|a_i|=|b_i|,\mbox{ for }i=1,2.\]
\end{proposition}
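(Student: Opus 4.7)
The plan is to reduce the claim to the same mechanism used in the proof of Corollary~\ref{C:disjointsupp}. Set $v := x - y$ and $w := x + y$; under this identification the hypothesis $x+y \perp x-y$ is exactly $v\perp w$, and the hypothesis that $x,y$ satisfy equation~\eqref{E:innerpdt-codition} becomes $|\langle v,\phi_i\rangle| = |\langle w,\phi_i\rangle|$ for every $i$.

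Having set this up, I would apply weak phase retrieval to the pair $(v,w)$: since $\Phi$ does weak phase retrieval and the frame measurements of $v$ and $w$ agree in absolute value, the vectors $v$ and $w$ must weakly have the same signs or weakly opposite signs. I would then invoke the contrapositive of Lemma~\ref{lem1}: two orthogonal vectors sharing a coordinate where both are nonzero cannot weakly have the same or opposite signs. Since $v\perp w$, this forces $v_k w_k = 0$ for every $k \in [m]$.

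The conclusion then follows from a one-line algebraic computation:
\[ v_k w_k \;=\; (a_k - b_k)(a_k + b_k) \;=\; a_k^2 - b_k^2, \]
so $a_k^2 = b_k^2$ and hence $|a_k|=|b_k|$ for every $k \in [m]$. In particular, this gives $|a_i|=|b_i|$ and $|a_j|=|b_j|$, which (under the notational relabelling $i,j \mapsto 1,2$) is the claimed conclusion. I do not anticipate any real obstacle; the argument is essentially a streamlined repackaging of the proof of Corollary~\ref{C:disjointsupp}, combined with the trivial identity $v_kw_k = a_k^2 - b_k^2$. The one subtle point is reading ``$x,y$ satisfy equation~\eqref{E:innerpdt-codition}'' as the assertion that that equation holds for the derived pair $(v,w)=(x-y,\,x+y)$, which is the natural convention inherited from the corollary.
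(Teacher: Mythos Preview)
Your proposal is correct and follows essentially the same route as the paper's proof: both apply weak phase retrieval to the pair $(x+y,\,x-y)$, then use orthogonality together with Lemma~\ref{lem1} to force disjoint supports, and read off $|a_k|=|b_k|$. Your use of the identity $(a_k-b_k)(a_k+b_k)=a_k^2-b_k^2$ is a marginally cleaner way to extract the conclusion than the paper's brief casework, but the underlying argument is the same; your reading of ``satisfy equation~\eqref{E:innerpdt-codition}'' as referring to the derived pair $(x-y,\,x+y)$ also matches the paper's implicit usage.
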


\begin{proof}
Without loss of generality, take $x= (a_1,a_2,0,\ldots,0)$ and $y=(b_1,b_2, \\ 0,\ldots,0)$. Observe that both $x+y$ and $x-y$ either weakly have the same phase or weakly have the opposite phase. Thus, by Corollary~\ref{C:disjointsupp}, $x+y$ and $x-y$ have disjoint support as these vectors are orthogonal.
Also,
\[ x+y=(a_1+b_1,a_2+b_2,0,\ldots,0)\mbox{ and }
x-y=(a_1-b_1,a_2-b_2,0,\ldots,0).\]
Since $x+y$ and $x-y$ are disjointly supported, it reduces to the cases where either $a_1=\pm b_1$ and $
 a_2=\pm b_2$. In both cases, it follows from equation~\eqref{E:innerpdt-codition} that $|a_i|=|b_i|$
 for all $i=1,2,\ldots,m.$
\end{proof}

The next theorem gives the main result about the minimum number of vectors required to do weak phase retrieval in $\RR^m$.  Recall that phase retrieval requires $n\ge 2m-1$
vectors.

\begin{theorem}\label{T:WPhaseRetbound}
If $\{\phi_i\}_{i=1}^n$ does weak phase retrieval on $\RR^m$ then $n\geq 2m-2$.
\end{theorem}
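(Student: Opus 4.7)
The plan is to prove the contrapositive: if $n \leq 2m-3$, then $\{\phi_i\}_{i=1}^n$ fails weak phase retrieval. To do this I will exhibit vectors $x,y\in\RR^m$ with $|\langle x,\phi_i\rangle|=|\langle y,\phi_i\rangle|$ for every $i$, with $x\perp y$, and with some coordinate $j$ satisfying $x_jy_j\neq 0$. Lemma~\ref{lem1} will then immediately force $x,y$ to fail to weakly have the same or opposite signs, breaking weak phase retrieval.

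The construction reuses the template from the proof of Corollary~\ref{C:disjointsupp}. Since $n\le 2m-3$, I can split $[n]=I\sqcup I^c$ with $|I|=\min(n,m-2)\le m-2$ and $|I^c|\le m-1$. Writing $A=\mathrm{span}\{\phi_i\}_{i\in I}$ and $B=\mathrm{span}\{\phi_i\}_{i\in I^c}$, one gets $\dim A^{\perp}\ge 2$ and $\dim B^{\perp}\ge 1$. For any $u\in A^{\perp}$ and $v\in B^{\perp}$, setting $x=u+v$ and $y=u-v$ yields $|\langle x,\phi_i\rangle|=|\langle y,\phi_i\rangle|$ for every $i$; moreover $\langle x,y\rangle=\|u\|^2-\|v\|^2$ and $x_jy_j=u_j^2-v_j^2$. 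It therefore suffices to arrange $\|u\|=\|v\|$ together with $|u_j|\neq|v_j|$ for at least one coordinate $j$.

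The heart of the argument is the following auxiliary lemma, which I would prove before assembling the contradiction: \emph{if $U\subseteq\RR^m$ is a subspace with $\dim U\ge 2$ and $v\in\RR^m$ is nonzero, then there exists $u\in U$ with $\|u\|=\|v\|$ and $|u_j|\neq|v_j|$ for some $j$.} Argue by contradiction: suppose every $u$ on the radius-$\|v\|$ sphere of $U$ satisfies $u_j^2=v_j^2$ for all $j$. Pick an orthonormal pair $e_1,e_2\in U$ and evaluate the constraint along the curve $\|v\|(\cos\theta\,e_1+\sin\theta\,e_2)$, obtaining $(\cos\theta\,e_{1,j}+\sin\theta\,e_{2,j})^2=(v_j/\|v\|)^2$ for all $\theta$. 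Expanding and using $e_{1,j}^2=e_{2,j}^2=(v_j/\|v\|)^2$ collapses the identity to $\sin(2\theta)\,e_{1,j}e_{2,j}=0$ for every $\theta$, so $e_{1,j}e_{2,j}=0$, and together with the equal-squares condition this forces $v_j=0$. Running this for every $j$ contradicts $v\neq 0$.

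The key lemma is the only real obstacle; the remainder is bookkeeping (verifying that $x,y$ as constructed are nonzero, that $x\perp y$, and that Lemma~\ref{lem1} applies). Note that the argument leans crucially on having $\dim A^{\perp}\ge 2$, which is exactly what the inequality $n\le 2m-3$ secures through the partition with $|I|=m-2$; this is the precise reason the bound in the theorem is $2m-2$ rather than something smaller.
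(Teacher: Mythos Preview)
Your proof is correct and follows essentially the same route as the paper: the same partition $|I|\le m-2$, $|I^c|\le m-1$, the same $x=u+v$, $y=u-v$ construction, and the same core observation that a subspace of dimension $\ge 2$ cannot have every vector on its sphere satisfy $|u_j|=|v_j|$ for all $j$. The one difference is in how that last observation is justified: you prove it via an explicit trigonometric expansion along a circle in $U$, whereas the paper simply notes that the constraints $|u_j|=|v_j|$ for all $j$ admit at most $2^m$ solutions $u$, while the unit sphere of a $\ge 2$-dimensional subspace is infinite. Your auxiliary lemma is correct, but the paper's cardinality argument reaches the same conclusion in one line.
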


\begin{proof}
For a contradiction assume $n\leq 2m-3$ and choose $I\subset [n]$ with $I=[m-2]$.  Then $|I|=m-2$ and $|I^c|\leq m-1$. For this partition of $[n]$, let $x+y$ and $x-y$ be as in the proof of Corollary \ref{C:disjointsupp}. Then $x+y$ and $x-y$ must be disjointly supported which follows from the Corollary~\ref{C:disjointsupp}. Therefore for each $i=1,2,\dots,m$, $a_i=\epsilon_i b_i$, where $\epsilon_i=\pm 1$ for each $i$ and $a_i, b_i$ are the ith coordinates of $x$ and $y$, respectively. Observe the conclusion holds for a fixed $x$ and any $y\in (\text{span} \{\phi\}_{i\in I})^\perp$ and $\dim \;(\text{span} \{\phi_i\}_{i\in I})^\perp \geq 2$. However this poses a contradiction since there are infinitely many distinct choices of $y$ in this space, while our argument shows that there are at most $2^m$ possibilities for $y$.
\end{proof}

Contrary to the initial hopes, the previous result shows that the minimal number of vectors doing weak phase retrieval is only one less than the number of
vectors doing phase retrieval. However it is interesting to note that a minimal set of vectors doing weak phase retrieval is necessarily full spark, as is true for
the minimal number of vectors doing phase retrieval,
as the next result shows.

\begin{theorem}\label{T:fullspark}
If $\Phi=\{\phi_i\}_{i=1}^{2n-2}$ does weak phase retrieval in $\RR^n$, then $\Phi$ is full spark.
\end{theorem}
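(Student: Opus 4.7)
The plan is to argue by contradiction along the lines of Theorem~\ref{T:WPhaseRetbound}, exploiting the parity of the exponent $2n-2$ as before. Suppose $\Phi$ fails to be full spark. Then some $n$-subset of $\Phi$ is linearly dependent, so there is $I\subset[2n-2]$ with $|I|=n$ such that $A:=\mathrm{span}\{\phi_i\}_{i\in I}$ has dimension at most $n-1$, giving $\dim A^\perp\geq 1$. Since $|I^c|=n-2$, the space $B:=\mathrm{span}\{\phi_i\}_{i\in I^c}$ has dimension at most $n-2$, so $\dim B^\perp\geq 2$.

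Next, I would fix a unit vector $x\in A^\perp$ and let $y$ range over unit vectors in $B^\perp$. For each such pair, the vectors $x+y$ and $x-y$ are orthogonal (since $\|x\|^2-\|y\|^2=0$), and a case split on whether $i\in I$ or $i\in I^c$ shows that
\[ |\langle x+y,\phi_i\rangle|=|\langle x-y,\phi_i\rangle|,\qquad i=1,\dots,2n-2, \]
because exactly one of $\langle x,\phi_i\rangle$ and $\langle y,\phi_i\rangle$ vanishes in each case. Weak phase retrieval then forces $x+y$ and $x-y$ to weakly have the same phase or weakly opposite phases.

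Combining the orthogonality of $x+y$ and $x-y$ with Lemma~\ref{lem1}, I would conclude that $(x+y)_i(x-y)_i=0$ for every coordinate $i$, which is $x_i^2=y_i^2$, i.e.\ $y_i=\pm x_i$. Thus for our fixed $x$, each admissible $y$ is one of at most $2^n$ sign-flipped copies of $x$. But the unit sphere in $B^\perp$ is infinite (having dimension at least $2$), delivering the contradiction and completing the proof.

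The main obstacle is mostly bookkeeping: one must verify that the cardinality $2n-2$ forces both $\dim A^\perp\geq 1$ (so an $x$ exists) and $\dim B^\perp\geq 2$ (so the family of $y$'s is uncountable), which together drive the contradiction. Once this is set up, the argument reduces to the familiar combination of the test vectors $x\pm y$ with Lemma~\ref{lem1}, exactly as in Theorem~\ref{T:WPhaseRetbound}.
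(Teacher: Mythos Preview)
Your argument is correct and follows exactly the approach of the paper, which proves Theorem~\ref{T:fullspark} by setting up $I$ with $|I|=n$ and $|I^c|=n-2$ and then invoking the proof of Theorem~\ref{T:WPhaseRetbound} verbatim. You have simply written out the details the paper leaves implicit: the dimension counts $\dim A^\perp\ge 1$, $\dim B^\perp\ge 2$, the use of Lemma~\ref{lem1} on the orthogonal pair $x\pm y$ to force $y_i=\pm x_i$, and the finiteness-versus-infinite-sphere contradiction.
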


\begin{proof}
 We proceed by way of contradiction.  Assume $\Phi$ is not full spark.  Then there exists $I\subset \{1,2,...,2n-2\}$ with $|I|=n$ such that $\dim \text{span} \{\phi_i\}_{i\in I}\leq n-1$.
Observe that the choice of $I$ above implies $|I^c|=n-2$. Now we arrive at a contradiction by applying the same argument used in (the proof of) Theorem \ref{T:WPhaseRetbound}.
\end{proof}

It is important to note that the converse of Theorem \ref{T:fullspark} does not hold. For example, the canonical basis in $\RR^2$ is trivially full spark but does not do weak phase retrieval.

If $\Phi$ is as in Theorem \ref{T:fullspark}, then the following corollary guarantees it is possible to add a vector to this set and obtain a collection which does phaseless reconstruction.

\begin{corollary}
If $\Phi$ is as in Theorem \ref{T:fullspark}, then there exists a dense set of vectors $\mathsf{F}$ in $\RR^n$ such that $\{\psi\}\cup\Phi$ does phaseless reconstruction for any  $\psi\in\mathsf{F}$.
\end{corollary}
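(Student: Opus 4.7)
The plan is to use the known equivalence (cited earlier in the paper) that in $\RR^n$, phaseless reconstruction is equivalent to the complement property, and to identify exactly which $\psi$ would break the complement property for $\{\psi\}\cup\Phi$. The augmented family has $2n-1$ vectors, so for any index partition $I\sqcup I^c$, one side has at least $n$ elements. I would call that side $L$ and perform a short case analysis.

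First, if $\psi\notin L$, then $L$ indexes at least $n$ vectors of $\Phi$, and full spark of $\Phi$ (Theorem \ref{T:fullspark}) forces $\{\phi_i\}_{i\in L}$ to span $\RR^n$. Likewise, if $\psi\in L$ with $|L|\ge n+1$, then $L\setminus\{\psi\}$ indexes at least $n$ vectors of $\Phi$ and again spans. The only remaining situation is $\psi\in L$ with $|L|=n$; then $L\setminus\{\psi\}$ indexes exactly $n-1$ vectors of $\Phi$, and since its complement $L^c\subset [2n-2]$ has only $n-1$ vectors (which cannot span $\RR^n$), we genuinely need the side containing $\psi$ to span. This forces $\psi$ to lie outside the $(n-1)$-dimensional subspace spanned by those $n-1$ vectors of $\Phi$ (the dimension is exactly $n-1$ by full spark).

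Next, I would define
\[
\mathcal{V}=\bigl\{\,\mathrm{span}\{\phi_i\}_{i\in J} : J\subset [2n-2],\ |J|=n-1\,\bigr\},
\]
a finite collection of at most $\binom{2n-2}{n-1}$ proper $(n-1)$-dimensional subspaces of $\RR^n$, and set
\[
\mathsf{F}=\RR^n\setminus\bigcup_{V\in\mathcal{V}}V.
\]
Being the complement of a finite union of proper linear subspaces, $\mathsf{F}$ is open and dense in $\RR^n$. For any $\psi\in\mathsf{F}$, the case analysis above shows that $\{\psi\}\cup\Phi$ satisfies the complement property, and hence does phaseless reconstruction in $\RR^n$.

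There is not really a hard step here; the proof is essentially a bookkeeping argument combining full spark with the pigeonhole fact that $2n-1$ splits so that one side has $\ge n$ indices. The only place requiring care is the boundary case $|L|=n$ with $\psi\in L$, where one must verify that the complement $L^c$ is automatically a subset of the index set of $\Phi$ (so its span lies in the $(n-1)$-dimensional subspaces listed in $\mathcal{V}$), and that the corresponding $n-1$ vectors of $\Phi$ really do span a subspace of dimension exactly $n-1$ — both follow directly from $|L|+|L^c|=2n-1$ and full spark.
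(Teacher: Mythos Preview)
Your proof is correct and essentially the same as the paper's: both take $\mathsf{F}$ to be the complement of $\bigcup_{|J|=n-1}\operatorname{span}\{\phi_i\}_{i\in J}$, note this is open and dense, and use full spark of $\Phi$ for the case analysis. The only cosmetic difference is that the paper verifies $\{\psi\}\cup\Phi$ is \emph{full spark} (which, with $2n-1$ vectors, immediately yields the complement property), whereas you verify the complement property directly; the underlying computations are the same.
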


\begin{proof}
We observe that the set of $\psi\in\RR^n$ such that $\Phi\cup \{\psi\}$ is full spark is dense in $\RR^n$. To see this let $\mathsf{G}=\bigcup_{\substack{I\subset [2n-2]\\ |I|=n-1}} \text{span} \{\phi_i\}_{i\in I}$. Then $\mathsf{G}$ is the finite union of hyperplanes so $\mathsf{G}^c$ is dense and $\{\psi\}\cup\Phi$ is full spark for any $\psi\in\mathsf{G}^c$. To verify that this collection of
vectors is full spark. Note that either a sub-collection of m-vectors is contained in $\Phi$, then it spans $\RR^n$, or the subcollection contains the vector $\psi$. In this case, denote $I\subset [2n-2]$ with $|I|=n-1$ and suppose $\sum_{i\in I} a_i\phi_i+a\psi =0$. Therefore $a\psi=-\sum_{i\in I}a_i\phi_i$ and if $a\neq 0$ then $a\psi\in\text{span} \{\phi_i\}_{i\in I}$, a contradiction. It follows $a=0$ and since $\Phi$ is full spark (see Theorem \ref{T:fullspark}), in particular $\{\phi_i\}_{i\in I}$ are linearly independent, it follows that $a_i=0$ for all $i\in I$.
\end{proof}

\subsection{Complex Case}
An extension of Proposition~\ref{prop1} in the complex case is given below:

\begin{proposition}\label{prop1-complex}
Let $x=(a_1, a_2, \ldots, a_m)$ and $y=(b_1, b_2, \ldots, b_m)$ in $\CC^m$. The following are equivalent:
\begin{enumerate}
  \item If there is a $|\theta|=1$ such that $ phase\ (a_i)=\theta phase\ (b_i)$, for some $i$, then
    $phase\ (a_ia_j)={\theta}^2 phase\ (b_ib_j), \ i\not= j \ \text{and} \  a_j\not= 0 \not= b_j.$

  \item $x$ and $y$ weakly have the same phase.
\end{enumerate}
\end{proposition}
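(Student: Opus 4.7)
The plan is to mirror the structure of the proof of Proposition \ref{prop1} from the real case, using the multiplicativity of phase (i.e. $\text{phase}(zw) = \text{phase}(z)\text{phase}(w)$ for $z,w\neq 0$) in place of the product rule for signs. The direction $(2)\Rightarrow(1)$ is short: if $x,y$ weakly have the same phase with witness $\theta$, then for any $i$ with $a_i\neq 0\neq b_i$ one has $\text{phase}(a_i)=\theta\,\text{phase}(b_i)$, and for any other $j$ with $a_j\neq 0\neq b_j$ the same $\theta$ also gives $\text{phase}(a_j)=\theta\,\text{phase}(b_j)$, so multiplying yields $\text{phase}(a_ia_j)=\theta^2\,\text{phase}(b_ib_j)$, which is exactly the conclusion demanded in (1).

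For the main direction $(1)\Rightarrow(2)$, I would set $K=\{i\in[m]:a_i\neq 0\neq b_i\}$ and first dispose of the trivial case $K=\emptyset$, where $x,y$ weakly have the same phase by the vacuous clause in the definition (any $\theta$ works). Assuming $K\neq\emptyset$, pick $i_0\in K$ and \emph{define}
\[
\theta := \frac{\text{phase}(a_{i_0})}{\text{phase}(b_{i_0})},
\]
which has modulus $1$. Then $\text{phase}(a_{i_0})=\theta\,\text{phase}(b_{i_0})$, so the hypothesis of (1) is activated at the index $i_0$ with this particular $\theta$.

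Now I would propagate: for every $j\in K$ with $j\neq i_0$, hypothesis (1) gives
\[
\text{phase}(a_{i_0})\,\text{phase}(a_j) \;=\; \text{phase}(a_{i_0}a_j) \;=\; \theta^{2}\,\text{phase}(b_{i_0}b_j) \;=\; \theta^{2}\,\text{phase}(b_{i_0})\,\text{phase}(b_j).
\]
Substituting $\text{phase}(a_{i_0})=\theta\,\text{phase}(b_{i_0})$ and cancelling the nonzero unimodular factor $\text{phase}(b_{i_0})$ on both sides yields $\text{phase}(a_j)=\theta\,\text{phase}(b_j)$. Since this holds for every $j\in K$ (including $j=i_0$ by construction), the vectors $x$ and $y$ weakly have the same phase with witness $\theta$, establishing (2).

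The only real subtlety, and where I would be most careful, is the logical reading of hypothesis (1): it asserts that \emph{whenever} some $\theta$ makes $\text{phase}(a_i)=\theta\,\text{phase}(b_i)$ at one index $i$, the quadratic identity $\text{phase}(a_ia_j)=\theta^{2}\,\text{phase}(b_ib_j)$ must hold at all other indices $j\in K$. The construction above legitimately supplies such an $i$ and $\theta$, so the implication can be fired, and the propagation step then turns this quadratic identity back into the linear identity needed in the definition of ``weakly have the same phase.'' No additional case split (analogous to the sign-vs.-opposite-sign split in the real case) is required, since in $\CC$ the scalar $\theta$ already ranges over the entire unit circle.
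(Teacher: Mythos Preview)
Your proof is correct and follows essentially the same route as the paper's: pick a base index $i_0\in K$, use the quadratic identity in (1) together with multiplicativity of phase to propagate $\text{phase}(a_j)=\theta\,\text{phase}(b_j)$ to every $j\in K$, and verify $(2)\Rightarrow(1)$ by direct multiplication. If anything, your write-up is slightly more careful than the paper's, since you explicitly construct $\theta=\text{phase}(a_{i_0})/\text{phase}(b_{i_0})$ to activate the conditional hypothesis in (1) and you separately dispose of the vacuous case $K=\emptyset$.
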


\begin{proof}
$(1) \Rightarrow (2)$: Let the index sets $I, J$ and $K$ be as in proposition~\ref{prop1}. By $(1)$, there is a $|\theta|=1$ such that $phase\ (a_i)=\theta phase\ (b_i)$ for some $i\in K$.

Now, for any $j\in K,\ j\not= i$, \[phase\ (a_ia_j)=phase\ (a_i)\ phase\ (a_j)=\theta phase\ (b_i)\ phase\ (a_j).\] But $phase\ (a_ia_j)={\theta}^2\ phase\ (b_ib_j)={\theta}^2\ phase\ (b_i) phase\ (b_j)$. Thus, it follows that $phase\ (a_j)=\theta\ phase\ (b_j)$. Since all other coordinates of either $x$ or $y$ are zero, it follows that $x,y$ weakly have the same phase.
\vskip12pt
$(2) \Rightarrow (1)$: By definition, there is a $|\theta |=1$ such that $phase\ (a_i)=\theta\ phase\ (b_i)$ for all $a_i\not=0\not=b_i.$ Now, $(1)$ follows immediately as $phase\ (a_ia_j)=phase\ (a_i) phase\ (a_j).$
\end{proof}

\section{Weak Phaseless Reconstruction}\label{s:weakphaseless}

In this section, we define weak phaseless reconstruction and study its characterizations. A formal definition is given below:

\begin{definition}
A family of vectors $\{\phi_i\}_{i=1}^n$ in $\HH^m$ does {\bf weak phaseless reconstruction} if for any $x=(a_1,a_2,\ldots,a_m)$ and $y=(b_1,b_2,\ldots,b_m)$
in $\HH^m$, with
\begin{equation}\label{E:innerpdtcondition_defn}
|\langle x,\phi_i\rangle|=|\langle y,\phi_i\rangle|,
\mbox{ for all }i=1,2,\ldots,m,
\end{equation}
there is a $|\theta|=1$ so that
\[ a_i= \theta b_i, \mbox{ for all }i=1,2,\ldots,m, \mbox{ for which } a_i\not= 0 \not= b_i.\]
In particular, $\{\phi_i\}$ does phaseless reconstruction for vectors having all non-zero coordinates.
\end{definition}

Note that if $\Phi = \{\phi_i\}_{i=1}^n \in \RR^m$ does weak phaseless reconstruction, then it does weak phase retrieval. The converse is not true in general. Let $x=(a_1,a_1,...,a_m)$ and $y=(b_1,b_2,...,b_m)$. If $\Phi=\{\phi_i\}_{i=1}^n\in \RR^m$ does weak phase retrieval and $|\{i|a_ib_i\neq 0\}|=2$ then $\Phi$ may not do weak phaseless reconstruction. If $a_ib_i=a_jb_j$  where $a_ib_i\neq 0$ and $a_jb_j\neq 0$ then we certainly cannot conclude in general that $|a_i|=|b_i|$ (see Example~\ref{Ex:R3}).

\begin{theorem}\label{thm1}
If $x=(a_1,a_2,\ldots,a_m)$ and $y=(b_1,b_2,\ldots,b_m)$ in $\RR^m$.  The following are equivalent:

\begin{enumerate}
\item[I.] There is a $\theta = \pm 1$ so that
\[ a_i = \theta b_i,\mbox{ for all } a_i \not= 0 \not= b_i.\]

\item[II.] We have  \,$a_ia_j=b_ib_j$ for all $1\le i, j \le m$, and $|a_i|=|b_i|$ for all $i$ such that $a_i\not=0 \not= b_i.$

\item[III.] The following hold:
\begin{enumerate}
\item[A.] Either $x,y$ have weakly the same signs or they have weakly the opposite
signs.

\item[B.] One of the following holds:
 \begin{itemize}
    \item[(i)]  There is a $1\le i \le m$ so that $a_i=0$ and $b_j=0$ for all $j\not= i$.

    \item[(ii)]  There is a $1\le i \le m$ so that $b_i=0$ and $a_j=0$ for all $j\not= i$.

    \item[(iii)]  If (i) and (ii) fail and $I = \{1\le i \le m:a_i\not= 0 \not= b_i\}$, then the following hold:
     \begin{itemize}
        \item[(a)]  If $i\in I^c$ then $a_i=0$ or $b_i=0$.

        \item[(b)]  For all $i\in I$, $|a_i|=|b_i|$.
     \end{itemize}
 \end{itemize}
\end{enumerate}
\end{enumerate}
\end{theorem}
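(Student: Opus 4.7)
The plan is to prove the three conditions pairwise equivalent via the cycle $\mathrm{I} \Rightarrow \mathrm{II} \Rightarrow \mathrm{III} \Rightarrow \mathrm{I}$, which distributes the work so that each step exploits the most convenient characterization.

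For $\mathrm{I} \Rightarrow \mathrm{II}$, I would take the scalar $\theta \in \{\pm 1\}$ supplied by $\mathrm{I}$ and denote by $S = \{i : a_i \neq 0 \neq b_i\}$ the common support. On $S$ the equality $a_i = \theta b_i$ gives $|a_i| = |b_i|$ directly, which is the magnitude claim of $\mathrm{II}$; and for $i,j \in S$ one has $a_i a_j = \theta^2 b_i b_j = b_i b_j$ since $\theta^2 = 1$. When one of the indices lies outside $S$, the product identity must be verified by a short support-structural argument showing that both sides vanish simultaneously.

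For $\mathrm{II} \Rightarrow \mathrm{III}$, the essential reduction is to specialize $i=j$ in $a_i a_j = b_i b_j$, obtaining $a_i^2 = b_i^2$; hence $|a_i| = |b_i|$ holds at every index, not merely on $S$, and $x$ and $y$ share the same zero set. This puts us in case $(\mathrm{B})(\mathrm{iii})$ of $\mathrm{III}$, and condition $(\mathrm{b})$ is precisely the equality of magnitudes on $S$. For $(\mathrm{A})$, pick any $i_0 \in S$, set $\theta = a_{i_0}/b_{i_0} \in \{\pm 1\}$, and for each other $j \in S$ use $a_{i_0} a_j = b_{i_0} b_j$ together with $|a_{i_0}| = |b_{i_0}|$ to conclude $a_j = \theta b_j$, delivering the weak same-signs or opposite-signs agreement.

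For $\mathrm{III} \Rightarrow \mathrm{I}$, I would do a case analysis on $(\mathrm{B})$. In cases $(\mathrm{i})$ and $(\mathrm{ii})$ the common support is empty, so $\mathrm{I}$ is vacuous with any choice of $\theta$. In case $(\mathrm{iii})$, $(\mathrm{A})$ produces $\theta \in \{\pm 1\}$ with $\sgn(a_i) = \theta\,\sgn(b_i)$ for $i$ in the common support, and $(\mathrm{b})$ supplies $|a_i| = |b_i|$ there; together these give $a_i = \theta b_i$ on the common support, which is exactly $\mathrm{I}$.

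The main obstacle I anticipate is the $\mathrm{I} \Rightarrow \mathrm{II}$ step: the product identity is asserted for every pair $(i,j) \in [m]\times[m]$, including pairs where an index lies in $S^c$, so one must confirm that $a_i a_j$ and $b_i b_j$ both vanish in every such mixed case. This reduces to a careful tracking of the disjoint-support configuration, in the spirit of Proposition~\ref{prop1} and Lemma~\ref{lem1}, and is what I expect to be the technical heart of the argument.
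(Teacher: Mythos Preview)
Your cycle $\mathrm{I}\Rightarrow\mathrm{II}\Rightarrow\mathrm{III}\Rightarrow\mathrm{I}$ and your treatment of $\mathrm{III}\Rightarrow\mathrm{I}$ coincide with the paper's. Your $\mathrm{II}\Rightarrow\mathrm{III}$ argument, however, is genuinely different: you set $i=j$ to obtain $a_i^2=b_i^2$ for every $i$, forcing $x$ and $y$ to share a zero set, and then read off parts (A) and (B)(iii) directly. The paper instead invokes Proposition~\ref{prop1} for (A) and, for (B), argues the case $a_i=0\neq b_i$ via $a_ia_j=0=b_ib_j\Rightarrow b_j=0$. Your route is shorter and avoids the case split.

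The real issue is the step you yourself flag as the ``main obstacle'' in $\mathrm{I}\Rightarrow\mathrm{II}$: it is not merely technical, it is false. Take $x=(1,0)$ and $y=(1,2)$ in $\RR^2$. The common support is $S=\{1\}$ and $a_1=b_1$, so condition $\mathrm{I}$ holds with $\theta=1$; yet $a_1a_2=0\neq 2=b_1b_2$ and $a_2^2=0\neq 4=b_2^2$. Thus one cannot ``confirm that $a_ia_j$ and $b_ib_j$ both vanish in every such mixed case'', and the product identity cannot be established for all pairs in $[m]\times[m]$. The paper's own proof finesses this by proving $a_ia_j=b_ib_j$ only ``for all $i,j$ (that are non-zero)''; it then, in $\mathrm{II}\Rightarrow\mathrm{III}$, uses the identity at indices with $a_i=0\neq b_i$, so the tension is present there as well. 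For your write-up this matters doubly: not only does your plan for $\mathrm{I}\Rightarrow\mathrm{II}$ fail, but if you retreat to the paper's weaker reading of $\mathrm{II}$ (products agree only where both are nonzero), then your diagonal trick $i=j$ in $\mathrm{II}\Rightarrow\mathrm{III}$ no longer applies and you would need to revert to the paper's case analysis.
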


\begin{proof}
$(I) \Rightarrow (II):$
By $(I)$ $a_i=\theta b_i$ for all $i$ such that both are non-zero, so $a_ia_j=(\theta b_i)(\theta b_j)$ and so $a_ia_j=\theta^2 b_ib_j$. Since $\theta=\pm 1$ it follows that $a_ia_j=b_ib_j$ for all $i,j$ (that are non-zero). The second part is trivial.
\vspace*{.5 cm}\\
$(II)\Rightarrow (III):$

(A)  This follows from Proposition \ref{prop1}.
\vskip12pt
(B)  (i)  Assume $a_i=0$ but $b_i\not= 0$.  Then for all $j\not= i$ we have
$a_ia_j=0=b_ib_j$ and so $b_j=0$.

(ii)  This is symmetric to (i).

(iii)  If (i) and (ii) fail, then by definition, for any $i$, either both $a_i$ and $b_i$ are zero or they are both non-zero, which proves (A). (B) is immediate.
\vskip10pt

$(III)\Rightarrow (I):$
The existence of $\theta$ is clear by part $A$. In part $B$, $(i)$ and $(ii)$ trivially imply ($I$). Assume $(iii)$ then for each $i$ such that $a_i\neq 0\neq b_i$ and $|a_i|=|b_i|$ then $a_i=\pm b_i$.
\end{proof}

\begin{corollary}
Let $\Phi$ be a frame for $\RR^m$.  The following are equivalent:
\begin{enumerate}
\item $\Phi$ does weak phaseless reconstruction.
\item For any $x=(a_1,a_2,\ldots,a_m)$ and $y=(b_1,b_2,\ldots,b_m)$
in $\RR^m$, if
\[ |\langle x,\phi_i\rangle|=|\langle y,\phi_i\rangle|
\mbox{ for all i},\]
then each of the equivalent conditions in Theorem~\ref{thm1} holds.
\end{enumerate}
\end{corollary}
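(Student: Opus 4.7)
The plan is to observe that this corollary is essentially a direct translation: weak phaseless reconstruction is defined by a particular conclusion on pairs $(x,y)$ satisfying the measurement-equality hypothesis, and that conclusion is exactly condition (I) of Theorem~\ref{thm1}. Once this identification is made, the equivalence with conditions (II) and (III) is handed to us by Theorem~\ref{thm1} itself.

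More concretely, I would proceed as follows. First, unpack the definition of weak phaseless reconstruction in the real case: since $|\theta|=1$ in $\RR$ forces $\theta=\pm 1$, the condition becomes exactly that there is some $\theta = \pm 1$ with $a_i = \theta b_i$ for all coordinates $i$ where $a_i \neq 0 \neq b_i$. This is word-for-word condition (I) of Theorem~\ref{thm1}. So statement (1) of the corollary is equivalent to saying that condition (I) of Theorem~\ref{thm1} holds for every pair $(x,y)$ that satisfies the measurement-equality hypothesis $|\langle x,\phi_i\rangle|=|\langle y,\phi_i\rangle|$ for all $i$.

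Next, to conclude (1)$\Leftrightarrow$(2), apply Theorem~\ref{thm1} pair-by-pair: for any fixed pair $(x,y)$ satisfying the hypothesis, the three statements (I), (II), and (III) are equivalent assertions about that pair. Therefore the universally quantified assertion ``(I) holds for every such pair'' coincides with ``(II) holds for every such pair'' and with ``(III) holds for every such pair,'' and hence with the conjunction of all three. This is exactly statement (2).

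There is no real obstacle here; the work has been done in Theorem~\ref{thm1}. The one small thing to be careful about is the direction of the quantifier: it would be incorrect to conflate the equivalence of the three conditions \emph{for a single pair} with an equivalence of universal statements unless one applies the per-pair equivalence uniformly, which is exactly what we do. Since Theorem~\ref{thm1} holds for arbitrary vectors $x,y \in \RR^m$, no additional hypothesis on $\Phi$ is needed to transfer the equivalence, and the corollary follows.
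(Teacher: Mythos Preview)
Your proposal is correct and matches the paper's treatment: the paper gives no separate proof of this corollary, treating it as immediate from the definition of weak phaseless reconstruction together with Theorem~\ref{thm1}. Your observation that the real-case condition $|\theta|=1$ forces $\theta=\pm 1$, making the defining condition literally condition~(I), and that the per-pair equivalence of (I), (II), (III) then lifts to the universally quantified statement, is exactly the intended reasoning.
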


The following theorems provide conditions under which weak phase retrieval is equivalent to weak phaseless reconstruction.

\begin{proposition}
Let $\Phi=\{\phi_i\}_{i=1}^n$ do weak phase retrieval on vectors $x=(a_1,a_2,...,a_m)$ and $y=(b_1,b_2,...,b_m)$ in $\HH^m$. If $|I|=|\{i:a_ib_i\neq 0\}|\geq 3$ and $a_ia_j=b_ib_j$ for all $i,j\in I$, then $\Phi$ does weak phaseless reconstruction.
\end{proposition}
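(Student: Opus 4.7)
The plan is to leverage the hypothesis $a_ia_j=b_ib_j$ on $I$ to upgrade the conclusion of weak phase retrieval (a unimodular phase relation on $I$) to that of weak phaseless reconstruction (a genuine equality $a_i=\theta b_i$ on $I$ with $\theta=\pm 1$). The premise that $\Phi$ does weak phase retrieval on $x,y$ implicitly carries the measurement equality $|\langle x,\phi_i\rangle|=|\langle y,\phi_i\rangle|$ for all $i$, so weak phase retrieval furnishes a $|\theta|=1$ with $\mathrm{phase}(a_i)=\theta\,\mathrm{phase}(b_i)$ for every $i\in I$.

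First, I would specialize the hypothesis to $i=j\in I$, obtaining $a_i^2=b_i^2$, which yields $|a_i|=|b_i|$ for each $i\in I$ (trivially in the real case, and in the complex case by taking absolute values). Next, using $|I|\geq 3\geq 2$, pick distinct indices $i,j\in I$ and rewrite $a_ia_j$ in polar form, invoking both the phase relation from the first paragraph and the modulus equality just established:
\[
a_ia_j \;=\; |a_i||a_j|\,\theta^2\,\mathrm{phase}(b_i)\mathrm{phase}(b_j) \;=\; \theta^2\, b_ib_j.
\]
Comparing with the hypothesis $a_ia_j=b_ib_j$ and using $b_ib_j\neq 0$ forces $\theta^2=1$, so $\theta\in\{-1,+1\}$.

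Combining the modulus equality $|a_i|=|b_i|$ with $\mathrm{phase}(a_i)=\theta\,\mathrm{phase}(b_i)$ for $\theta=\pm 1$ then gives $a_i=\theta b_i$ for every $i\in I$. Since $I$ is exactly the set of coordinates where both $a_i$ and $b_i$ are nonzero, this is precisely the weak phaseless reconstruction conclusion for the pair $(x,y)$.

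The proof is short and I do not anticipate any serious obstacle: the one step to execute carefully is the polar-form comparison, but division-by-zero is not an issue because every $b_i$ with $i\in I$ is nonzero by definition of $I$. The conceptual content is that $a_ia_j=b_ib_j$ forbids any $\theta$ with $\theta^2\neq 1$, collapsing the full unimodular ambiguity allowed by weak phase retrieval down to a single sign ambiguity. In the real case the argument simplifies further, since Proposition \ref{prop1} already delivers $\theta\in\{\pm 1\}$ from weak phase retrieval alone, and only the step extracting $|a_i|=|b_i|$ is needed.
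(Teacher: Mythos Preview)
Your argument rests on specializing the hypothesis $a_ia_j=b_ib_j$ to the diagonal $i=j$ to obtain $a_i^2=b_i^2$ directly. Although the statement as printed does not literally exclude $i=j$, the paper clearly intends the hypothesis for \emph{distinct} $i,j\in I$: the preceding paragraph exhibits a case with $|I|=2$ (namely $x=(1,2,0)$, $y=(2,1,0)$) where $a_1a_2=b_1b_2$ yet $|a_i|\neq |b_i|$, and the very presence of the hypothesis $|I|\ge 3$ signals that three distinct indices are needed. Under your reading, $|I|\ge 2$ would already suffice and the weak-phase-retrieval hypothesis would be essentially idle---a strong sign that you have read more into the assumption than was intended.

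The paper's proof extracts $|a_i|=|b_i|$ without touching the diagonal: for distinct $i,j,k\in I$ one multiplies $a_ia_j=b_ib_j$ and $a_ia_k=b_ib_k$ to get $a_i^2(a_ja_k)=b_i^2(b_jb_k)$, and then cancels the nonzero factor $a_ja_k=b_jb_k$ to conclude $a_i^2=b_i^2$. Once you have $|a_i|=|b_i|$ this way, the remainder of your write-up is fine. In fact your detour through $\theta^2=1$ is unnecessary: from $\mathrm{phase}(a_i)=\theta\,\mathrm{phase}(b_i)$ and $|a_i|=|b_i|$ you get $a_i=\theta b_i$ immediately, and the definition of weak phaseless reconstruction only asks for some $|\theta|=1$, not $\theta=\pm 1$.
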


\begin{proof}
If $i,j,k$ are three members of $I$ with $a_ia_j=b_ib_j$, $a_ia_k=b_ib_k$ and $a_ka_j=b_kb_j$, then a short calculation gives $a_i^2a_ja_k=b_i^2b_jb_k$ and hence $|a_i|=|b_i|$. This computation holds for each $i\in I$ and
since $\Phi$ does phase retrieval,
there is a $|\theta|=1$ so that $phase\ a_i=\theta \ phase
\ b_i$ for all $i$.  It follows that $a_i=\theta \ b_i$ for
all $i=1,2,\ldots,m$.
\end{proof}

It turns out that whenever a frame contains the unit
vector basis, then weak phase retrieval and phaseless
reconstruction are the same.

\begin{proposition}\label{T:unitvector_case}
Let the frame $\Phi =\{\phi_i\}_{i=1}^n \in \RR^m$ does weak phase retrieval. If $\Phi$ contains the standard basis vectors, then $\Phi$ does phaseless reconstruction.
\end{proposition}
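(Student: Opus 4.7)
The plan is to exploit the presence of the standard basis inside $\Phi$ to force the coordinate-wise magnitudes of $x$ and $y$ to coincide, and then combine this with the real weak phase retrieval property (which pins down a global sign $\theta=\pm 1$) to conclude $x=\theta y$.

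First I would take $x=(a_1,\dots,a_m)$ and $y=(b_1,\dots,b_m)$ satisfying
\[
|\langle x,\phi_i\rangle|=|\langle y,\phi_i\rangle|, \quad i=1,\dots,n.
\]
Since the standard basis vectors $e_1,\dots,e_m$ appear among the $\phi_i$, testing against $e_k$ yields $|a_k|=|b_k|$ for every $k\in[m]$. The immediate but crucial consequence is that $x$ and $y$ have \emph{identical supports}: $a_k=0$ if and only if $b_k=0$.

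Next I invoke the weak phase retrieval hypothesis. In the real setting, this supplies a $\theta\in\{+1,-1\}$ such that $\mathrm{sgn}(a_k)=\theta\,\mathrm{sgn}(b_k)$ for every index $k$ with $a_k\neq 0\neq b_k$. Combined with $|a_k|=|b_k|$, this gives $a_k=\theta b_k$ on the common support. On the complement of the support, $a_k=b_k=0$ so $a_k=\theta b_k$ trivially. Therefore $x=\theta y$, which is precisely phaseless reconstruction.

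There is essentially no obstacle here; the only thing to be careful about is that the two pieces of information supplied by the standard basis ($|a_k|=|b_k|$) and by weak phase retrieval (a global sign on the common support) are \emph{compatible} because, thanks to equality of magnitudes, the support mismatch phenomenon that distinguishes weak phase retrieval from phase retrieval cannot occur. This is what makes the argument collapse to one line once both hypotheses are combined.
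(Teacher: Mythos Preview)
Your argument is correct and follows the same approach as the paper's proof: use the standard basis vectors to obtain $|a_k|=|b_k|$ for all $k$, then invoke weak phase retrieval for the global sign. The paper's version is terser (it stops at ``$|a_i|=|b_i|$ for all $i$, hence the theorem''), while you spell out the support-matching and sign-combination steps explicitly; this is the same proof with more detail.
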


\begin{proof}
Let $x=(a_1, a_2, \ldots, a_m)$, $y=(b_1, b_2, \ldots, b_m) \in \RR^m$. By definition of weak phase retrieval, $\Phi$ satisfies the equation~\ref{E:innerpdtcondition_defn}. In particular, for $\phi_i=e_i$, the equation~\ref{E:innerpdtcondition_defn} implies that $|a_i|=|b_i|,~\forall i=1, 2, \ldots, m$. Hence the theorem.
\end{proof}

We conclude this section by showing the surprising
result that weak phaseless reconstruction is same as phaseless reconstruction in $\RR^m$, i.e. it is not really {\bf weak}.
\begin{theorem}
Frames which do weak phaseless reconstruction
in $\RR^m$ do phaseless reconstruction.
\end{theorem}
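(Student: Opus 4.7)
The plan is to argue by contradiction, leveraging the equivalence (cited earlier in the paper, from \cite{BCE, SaCa016}) between phaseless reconstruction in $\RR^m$ and the complement property. Assume that $\Phi$ does weak phaseless reconstruction but fails phaseless reconstruction; then $\Phi$ must fail the complement property, so there is a partition $[n]=I\cup I^c$ with both $A:=\spn\{\phi_i\}_{i\in I}$ and $B:=\spn\{\phi_i\}_{i\in I^c}$ proper subspaces. Fix nonzero vectors $f\in A^\perp$ and $g\in B^\perp$.

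For any scalars $\alpha,\beta\neq 0$, the pair
\[ x:=\alpha f+\beta g,\qquad y:=\alpha f-\beta g \]
satisfies $|\langle x,\phi_i\rangle|=|\langle y,\phi_i\rangle|$ for every $i$, since $\langle f,\phi_i\rangle=0$ for $i\in I$ and $\langle g,\phi_i\rangle=0$ for $i\in I^c$. Weak phaseless reconstruction therefore supplies $\theta_{\alpha,\beta}\in\{\pm 1\}$ with $a_j=\theta_{\alpha,\beta}\,b_j$ whenever $a_j:=\alpha f_j+\beta g_j$ and $b_j:=\alpha f_j-\beta g_j$ are both nonzero.

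The crucial observation is that $a_j=b_j$ iff $g_j=0$, and $a_j=-b_j$ iff $f_j=0$. Hence, at any index $j$ with both $f_j\neq 0$ and $g_j\neq 0$, one has $a_j\neq\pm b_j$, so the only way for weak phaseless reconstruction to succeed at the pair $(x,y)$ is to force $a_jb_j=0$, i.e., $|f_j|/|g_j|=|\beta/\alpha|$. Since a single ratio cannot simultaneously equal $1$ and $2$, feeding in both $(\alpha,\beta)=(1,1)$ and $(\alpha,\beta)=(1,2)$ forces $\mathrm{supp}(f)\cap\mathrm{supp}(g)=\emptyset$.

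Finally, with $f,g$ nonzero and disjointly supported, pick $j$ with $f_j\neq 0$ (so $g_j=0$) and $k$ with $g_k\neq 0$ (so $f_k=0$). At $(\alpha,\beta)=(1,1)$ one has $a_j=b_j=f_j\neq 0$, forcing $\theta_{1,1}=1$, while $a_k=g_k\neq 0$ and $b_k=-g_k\neq 0$, forcing $\theta_{1,1}=-1$; this contradiction completes the proof. The only real obstacle is choosing the correct two-parameter family of test pairs: once one has the freedom to slide $\alpha$ and $\beta$ independently within $(\alpha f+\beta g,\alpha f-\beta g)$, the rigidity of the requirement $a_j=\theta b_j$ with $\theta\in\{\pm 1\}$ does the remaining bookkeeping essentially automatically.
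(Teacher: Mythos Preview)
Your proof is correct and follows essentially the same strategy as the paper: assume failure of the complement property, pick $f\in A^{\perp}$, $g\in B^{\perp}$, and exploit the family of test pairs $\alpha f\pm\beta g$. The paper argues via three separate cases depending on how the supports of $f$ and $g$ overlap, whereas you first use two values of the ratio $\beta/\alpha$ to force $\mathrm{supp}(f)\cap\mathrm{supp}(g)=\emptyset$ and then derive the sign contradiction; this reorganization is a mild streamlining of the same argument rather than a different route.
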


\begin{proof}
For a contradiction assume $\Phi=\{\phi\}_{i=1}^n\subset \RR^m$ does weak phaseless reconstruction but fails the complement property. Then there exists $I\subset [n]$ such that $\text{Span}_{i\in I}\;\phi_i\neq \RR^m$ and $\text{Span}_{i\in I^C}\;\phi_i\neq \RR^m$. Pick non-zero vectors $x,y\in\RR^m$ such that $x\perp \text{Span}_{i\in I}\;\phi_i\neq \RR^m$ and $y\perp \text{Span}_{i\in I^C}\;\phi_i\neq \RR^M$. Then for any $c\neq 0$ we have
\[|\langle x+cy,\phi_i\rangle |=|\langle x-cy,\phi_i\rangle |\;\;\;\text{ for all } i\in[n].\]

Now we consider the following cases where $x_i$ and $y_i$ denotes the $i^\text{th}$ coordinate of the vectors $x$ and $y$.
\begin{enumerate}
\item[Case 1:] $\{i:x_i\neq 0\}\cap \{i:y_i\neq 0\}=\emptyset$\\
Set $c=1$ and observe since $x\neq 0$ there exists some $i\in [n]$ such that $x_i\neq 0$ and $y_i=0$ and similarly there exists $j\in [n]$ such that $y_j\neq 0$ but $x_j=0$. Then $x+y$ and $x-y$ have the same sign in the $i^\text{th}$-coordinate but opposite signs in the $j^\text{th}$ coordinate, this contradicts the assumption that $\Phi$ does weak phaseless reconstruction.

\item[Case 2:] There exists $i, j\subset [n]$ such that $x_iy_i\neq 0$ and $x_j=0$, $y_j\neq 0$.\\
Without loss of generality, we may assume $x_iy_i>0$ otherwise consider $-x$ or $-y$. If $0< c\leq \frac{x_i}{y_i}$, then the $i^\text{th}$ coordinate of $x+cy$ and $x-cy$ have the same sign whereas the $j^\text{th}$ coordinates have opposite signs which contradicts the assumption. By considering $y+cx$ and $y-cx$ this argument holds in the case that $y_j=0$ and $x_j\neq 0$.

\item[Case 3:] $x_i=0$ if and only if $y_i=0$.\\
By choosing $c$ small enough, we have that $x_i+cy_i\not=0$
if and only if $x_i-cy_i\not= 0$.  By weak phase retrieval,
there is a $|d|=1$ so that $x_i+cy_i=d(x_i-cy_i)$.
But this forces either $x_i\not= 0$ or $y_i\not= 0$ but
not both which contradicts the assumption for case 3.
\end{enumerate}
\end{proof}

It is known \cite{BaCaCaJaWo014} that if $\Phi=\{\phi_i\}_{i=1}^n$ does phase retrieval or phaseless reconstruction
in $\HH^m$ and $T$ is an invertible operator on $\HH^m$
then $\{T\phi_i\}_{i=1}^n$ does phase retrieval.  It now
follows that the same result holds for weak phaseless
reconstruction.  However, this result does not hold for
weak phase retrieval.  Indeed, if $\phi_1=(1,1)$ and
$\phi_2=(1,-1)$, then we have seen that this frame does
weak phase retrieval in $\RR^2$.  But the invertible
operator $T(\phi_1)=(1,0),\ T(\phi_2)=(0,1)$ maps this
frame to a frame which fails weak phase retrieval.

\section{Illustrative Examples}\label{s:examples}  

In this section, we provide examples of frames that do weak phase retrieval in $\RR^3$ and $\RR^4$. As seen earlier, the vectors $(1, 1)$ and $(1, -1)$ do weak phase retrieval in $\RR^2$ but fail phase retrieval.

Our first example is a frame which does weak phase retrieval but fails weak phaseless reconstruction.
\noindent {\bf Example}:\label{Ex:R3} We work with the row vectors of

\[\Phi=\begin{bmatrix}
\phi_1&\vline&1&1&1\\
\phi_2&\vline&-1&1&1\\
\phi_3&\vline&1&-1&1\\
\phi_4&\vline&1&1&-1
\end{bmatrix}\]

Observe that the rows of this matrix form an equal norm tight frame $\Phi$ (and hence do norm retrieval).
If $x=(a_1,a_2,a_3)$ the following is the coefficient matrix where the row $E_i$ represents the coefficients obtained from the expansion $|\langle x,\phi_i\rangle |^2$

\[1/2\begin{bmatrix}
&\vline&a_1a_2&a_1a_3&a_2a_3&\sum_{i=1}^3 a_i^2\\
E_1&\vline&1&1&1&1/2\\
E_2&\vline&-1&-1&1&1/2\\
E_3&\vline&-1&1&-1&1/2\\
E_4&\vline&1&-1&-1&1/2
\end{bmatrix}\]
Then the following row operations give

\[1/2\begin{bmatrix}
&\vline&a_1a_2&a_1a_3&a_2a_3&\sum_{i=1}^3 a_i^2\\
F_1=E_1-E_2&\vline&1&1&0&0\\
F_2=E_3-E_4&\vline&-1&1&0&0\\
F_3=E_1-E_3&\vline&1&0&1&0\\
F_4=E_2-E_4&\vline&-1&0&1&0\\
F_4=E_1-E_4&\vline&0&1&1&0\\
F_5=E_2-E_3&\vline&0&-1&1&0
\end{bmatrix}\]

\[1/2\begin{bmatrix}
&\vline&a_1a_2&a_1a_3&a_2a_3&\sum_{i=1}^3 a_i^2\\
F_1-F_2&\vline&1&0&0&0\\
F_3+F_4&\vline&0&0&1&0\\
F_5-F_6&\vline&0&1&0&0
\end{bmatrix}\]
Therefore we have demonstrated a procedure to identify $a_ia_j$ for all $1\le i \not= j \le 3$. This shows that given $y=(b_1,b_2,b_3)$ satisfying $|\langle x,\phi_i\rangle|^2=|\langle y,\phi_i\rangle|^2$ then by the procedure outlined above we obtain
\[ a_ia_j = b_ib_j,\mbox{ for all } 1 \le i \not= j \le 3.\]
By Proposition \ref{prop1}, these four vectors do weak sign retrieval in $\RR^3$. However this family fails to do weak phaseless reconstruction. Observe the vectors $x=(1,2,0)$ and $y=(2,1,0)$ satisfy $|\langle  x,\phi_i\rangle |=|\langle  y,\phi_i\rangle |$ however do not have the
same absolute value in each coordinate.
\vskip12pt

Our next example is a frame which does weak phase retrieval
 but fails phaseless reconstruction.
\noindent {\bf Example}:\label{Ex:R4}
We provide a set of six vectors in $\RR^4$ which does weak phase retrieval in $\RR^4$. In this case our vectors are the rows of the matrix:

\[\Phi=\begin{bmatrix}
\phi_1&\vline&1&1&1&-1\\
\phi_2&\vline&-1&1&1&1\\
\phi_3&\vline&1&-1&1&1\\
\phi_4&\vline&1&1&-1&-1\\
\phi_5&\vline&1&-1&1&-1\\
\phi_6&\vline&1&-1&-1&1
\end{bmatrix}\]

Note that $\Phi$ fails to do phase retrieval as it requires seven vectors in $\RR^4$ to do phase retrieval in $\RR^4$.
Given $x=(a_1,a_2,a_3,a_4),\ y=(b_1,b_2,b_3,b_4)$ we assume
\begin{equation}\label{E}
|\langle x,\phi_i\rangle|^2=|\langle y,\phi_i\rangle|^2, \text{ for all } i=1,2,3,4,5,6.
\end{equation}
\vskip12pt

\noindent {\bf Step 1:} The following is the coefficient matrix obtained after expanding $\langle x, \phi_i\rangle|^2$ for $i=1, 2, \ldots, 6$.
\[\frac{1}{2}\begin{bmatrix}
&\vline&a_1a_2&a_1a_3&a_1a_4&a_2a_3&a_2a_4&a_3a_4&\sum_{i=1}^4a_i^2\\
E_1&\vline&1&1&-1&1&-1&-1&\frac{1}{2}\\
E_2&\vline&-1&-1&-1&1&1&1&\frac{1}{2}\\\
E_3&\vline&-1&1&1&-1&-1&1&\frac{1}{2}\\\
E_4&\vline&1&-1&-1&-1&-1&1&\frac{1}{2}\\\
E_5&\vline&-1&1&-1&-1&1&-1&\frac{1}{2}\\\
E_6&\vline&-1&-1&1&1&-1&-1&\frac{1}{2}\
\end{bmatrix}\]

\noindent {\bf Step 2:} Consider the following row operations, the last column becomes all zeroes so we drop it and we get:

\[\begin{bmatrix}
F_1=\frac{1}{2}(E_1-E_4)&\vline&0&1&0&1&0&-1\\
F_2=\frac{1}{2}(E_2-E_5)&\vline&0&-1&0&1&0&1\\
F_3=\frac{1}{2}(E_3-E_6)&\vline&0&1&0&-1&0&1\\
A_1=\frac{1}{2}(F_1+F_2)&\vline&0&0&0&1&0&0\\
A_2=\frac{1}{2}(F_1+F_3)&\vline&0&1&0&0&0&0\\
A_3=\frac{1}{2}(F_2+F_3)&\vline&0&0&0&0&0&1\\
\end{bmatrix}\]

\noindent {\bf Step 3:} Subtracting out $A_1, A_2$ and $A_3$ from $E_1, E_2, E_3$ and $E_4$, we get:

\[\begin{bmatrix}
E_1'=&\vline&1&0&-1&0&-1&0\\
E_2'=&\vline&-1&0&-1&0&1&0\\
E_3'=&\vline&-1&0&1&0&-1&0\\
E_4'=&\vline&1&0&-1&0&-1&0\\
\end{bmatrix}\]

\noindent {\bf Step 4:}  We will show that $a_ia_j=b_ib_j$ for all $i\not= j$.

Performing the given operations we get:

\[\begin{bmatrix}
D_1=\frac{-1}{2}(E_2'+E_3')&\vline&1&0&0&0&0&0\\
A_2&\vline&0&1&0&0&0&0\\
D_2=\frac{-1}{2}(E_1'+E_2')&\vline&0&0&1&0&0&0\\
A_1&\vline&0&0&0&1&0&0\\
D_3=\frac{-1}{2}(E_3'+E_4')&\vline&0&0&0&0&1&0\\
A_3 &\vline&0&0&0&0&0&1\\
\end{bmatrix}\]

Doing the same operations with $y=(b_1,b_2,b_3, b_4)$ we get:
\[ a_ia_j = b_ib_j,\mbox{ for all } 1 \le i \not= j \le 4.\]

\begin{remark} It should be noted that weak phase retrieval does not imply norm retrieval. We may use the previous example to illustrate this. Let $\Phi=\{\phi_i\}_{i=1}^6$ be as in Example~$\ref{Ex:R4}$. Suppose $\Phi$ does norm retrieval. Since there are only 6 vectors $\Phi$ fails the complement property. Now, take $x=(1,1,-1,1)\perp \{\phi_1, \phi_2, \phi_3\}$ and $y=(1,1,1,1)\perp \{\phi_4, \phi_5, \phi_6\}$. Then, we have $|\langle x+y,\phi_i|=|\langle x-y, \phi_i\rangle|$ for all $i=1,2, \ldots 6.$ From the definition~\ref{D:norm retrieval}, this implies $\norm{x+y}=\norm{x-y}$.  Since
$\|x\|=\|y\|$, this implies that $x \perp y$, which is a contradiction.
\end{remark}



\begin{thebibliography}{WW}

\bibitem{BaCaCaJaWo014}
  J. Cahill, P. G. Casazza, J. Jasper, and L.M. Woodland, {\it Phase retrieval and norm retrieval}, (2014). arXiv preprint arXiv:1409.8266.

\bibitem{BCE}
    R. Balan, P.G. Casazza, and D. Edidin, {\it On Signal Reconstruction Without Phase}, Appl. and Compt. Harmonic Analysis, {\bf 20}, No. 3, (2006) 345-356.

\bibitem{BCMN}
    A.S. Bandeira, J. Cahill, D. Mixon and A.A. Nelson, {\it Saving phase: injectivity and stability for phase retrieval}, Appl. and Comput. Harmonic Anal, {\bf 37} (1) (2014) 106-125.

\bibitem{BaMn86}
    R. H. Bates and D. Mnyama, {\it The status of practical Fourier phase retrieval}, Advances in Electronics and Electron Physics, {\bf 67} (1986), 1-64.

\bibitem{BeRi99}
    C. Becchetti and L. P. Ricotti, {\it Speech recognition theory and C++ implementation}, Wiley (1999).

\bibitem{B}
    B. Bodmann and N. Hammen, \emph{Stable Phase retrieval with low redundancy frames}, Preprint. arXiv:1302.5487.

\bibitem{SaCa016}
    S. Botelho-Andrade, P. G. Casazza, H. Van Nguyen, J. C. Tremain, {\it Phase retrieval versus phaseless reconstruction}, J. Math Anal. Appl., {\bf 436} (1), (2016) 131-137.

\bibitem{CCPW}
    J. Cahill, P. Casazza, K. Peterson, L. Woodland, \emph{Phase retrieval by projections}, Available online: arXiv:1305.6226.

\bibitem{CEHV}
    A. Conca, D. Edidin, M. Hering, and C. Vinzant, {\it An algebraic characterization of injectivity of phase retrieval}, Appl. Comput. Harmonic Anal, {\bf 38} (2) (2015) 346-356.

\bibitem{Dr010}
    J. Drenth, {\it Principles of protein x-ray crystallography}, Springer, 2010.

\bibitem{Fi78}
    J. R. Fienup, {\it Reconstruction of an object from the modulus of its fourier transform}, Optics Letters, {\bf 3} (1978), 27-29.

\bibitem{Fi82}
    J. R. Fienup, {\it Phase retrieval algorithms: A comparison}, Applied Optics, {\bf 21} (15) (1982), 2758-2768.

\bibitem{HMW}
    T. Heinosaari, L. Maszzarella, and M.M. Wolf, \emph{Quantum tomography under prior information}, Comm. Math. Phys. {\bf 318} No. 2 (2013) 355-374.

\bibitem{RaJu93}
    L. Rabiner, and B. H. Juang, {\it Fundamentals of speech recognition}, Prentice Hall Signal Processing Series (1993).

\bibitem{ReBlScCa004}
    J. M. Renes, R. Blume-Kohout, A. J. Scott, and C. M. Caves, {\it Symmetric Informationally Complete Quantum Measurements}, J. Math. Phys., {\bf 45} (2004), 2171-2180.

\bibitem{V}
    C. Vinzant, {\it A small frame and a certificate of its injectivity}, Preprint.  arXiv:  1502.0465v1.

\end{thebibliography}
\end{document}